\newtheorem{theorem}{Theorem}
\newtheorem{corollary}[theorem]{Corollary}
\newtheorem{lemma}[theorem]{Lemma}
\newtheorem{proposition}[theorem]{Proposition}
\newcommand{\mQ}{\mathbb{Q}}
\newcommand{\pa}{\left(}
\newcommand{\pb}{\right)}
\newcommand{\ca}{\left\{}
\newcommand{\cb}{\right\}}
\newcommand{\quism}{\stackrel{\simeq}{\longrightarrow}}
\newcommand{\secat}{{\rm secat}}
\newcommand{\msecat}{{\rm msecat}}
\newcommand{\cat}{{\rm cat}}
\newcommand{\tc}{{\rm TC}}
\newcommand{\mtc}{{\rm mTC}}
\newcommand{\mcat}{{\rm mcat}}
\newcommand{\id}{{\rm Id}}
\newcommand{\cdga}{{\bf cdga}}
\newcommand{\apl}{ {A_{PL}}}
\begin{document}
\title{Module sectional category of products}
\author{J.G. Carrasquel-Vera\footnote{Supported by the Belgian Interuniversity Attraction Pole (IAP) within the framework ``Dynamics, Geometry and Statistical Physics'' (DYGEST P7/18)}, P.-E. Parent, L. Vandembroucq}

\maketitle
\begin{abstract}
Extending a result of F\'elix-Halperin-Lemaire on Lusternik-Schnirelmann category of products, we prove additivity of a rational approximation for Schwarz's sectional category with respect to products of fibrations. \\
\end{abstract}

 \vspace{0.5cm}
 \noindent{2010 \textit{Mathematics Subject Classification} : 55M30, 55P62.}\\
 \noindent{\textit{Keywords}: Rational homotopy, sectional category, topological complexity.}
 \vspace{0.2cm}

\section*{Introduction}
The sectional category\cite{Schwarz66} (or Schwarz genus) of a fibration $p\colon E\rightarrow X$ is the smallest integer $n$ such that $X$ admits a cover by open sets on each of which a local section for $p$ exits. This homotopy invariant is a generalization of the well known Lusternik-Schnirelmann (LS) category\cite{Lusternik34} of a path-connected space $X$, $\cat(X)$, as it is the sectional category of the path fibration $PX\rightarrow X$, $\alpha\mapsto \alpha(1)$, where $PX$ is the space of paths starting at the base point.\\

One of the most important results of \cite{FHL98} says that, if $X$ and $Y$ are simply connected rational spaces of finite type, then $\cat(X\times Y)=\cat(X)+\cat(Y)$. This was done through Hess' theorem \cite{He91} by proving the analogous result for a lower bound of LS category called module LS category.\\

Throughout this paper we will consider all spaces to be simply connected CW-complexes of finite type. We will also denote $f_0$ the rationalisation of a map $f$. As for LS category, there exists a lower bound of sectional category, called module sectional category \cite{FGKV06}, for which we have $\mcat(X)=\msecat(PX\rightarrow X)$. In this paper we prove

\begin{theorem}\label{th:Main}
Let $f$ and $g$ be two fibrations. If $f_0$ admits a homotopy retraction, then \[\msecat(f\times g)=\msecat(f)+\msecat(g).\]
\end{theorem}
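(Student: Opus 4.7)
The plan is to model the proof on Félix–Halperin–Lemaire's treatment of $\mcat$ of products, with the retraction hypothesis providing the additional multiplicative structure needed to pass from the LS category setting to the sectional category setting. I would first fix relative Sullivan models $\varphi\colon(B,d)\to(B\otimes\Lambda V,d)$ of $f$ and $\varphi'\colon(B',d)\to(B'\otimes\Lambda V',d)$ of $g$, so that
$$\varphi\otimes\varphi'\colon(B\otimes B',d)\longrightarrow(B\otimes B'\otimes\Lambda(V\oplus V'),d)$$
is a relative Sullivan model of $f\times g$, and interpret $\msecat(\,\cdot\,)\le n$ as the existence of a suitable dg-module retraction of the projection by $\Lambda^{>n}$ of the fibre variables.

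For the inequality $\msecat(f\times g)\le\msecat(f)+\msecat(g)$ I would tensor the two individual module retractions. The word-length filtration is additive under tensor product in $\Lambda(V\oplus V')$, so an $m$-level retraction for $f$ together with an $n$-level retraction for $g$ produces an $(m+n)$-level retraction for $f\times g$. This step does not use the hypothesis on $f_0$ and is parallel to the corresponding inequality for $\mcat$.

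The reverse inequality is the core of the proof, and is where the retraction enters. Rationally the retraction yields a \cdga{} morphism $\rho\colon(B\otimes\Lambda V,d)\to(B,d)$ with $\rho\circ\varphi=\id_B$, exhibiting $B$ as a \cdga{} direct summand of $B\otimes\Lambda V$. Setting $m=\msecat(f)$ and $n=\msecat(g)$, I would package the non-existence of $(m-1)$- and $(n-1)$-level retractions as non-vanishing of two obstruction classes, take their external product, and then apply $\rho\otimes\id_{B'\otimes\Lambda V'}$ to show that the resulting class in the product model still detects the failure of an $(m+n-1)$-level retraction for $f\times g$. The role of $\rho$ here is essential: it is multiplicative (not just a module map), hence compatible with the word-length filtration, so that "collapsing away the $V$-variables" projects word-length $m+n-1$ data in $V\oplus V'$ onto $V'$-word-length at most $n-1$, which is what allows the Künneth-type factorisation of the product obstruction to reduce to the individual obstructions.

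The main obstacle I anticipate is making this obstruction/Künneth step precise. In the FHL proof of additivity for $\mcat$ the analogous statement is a finely tuned lemma about ideal powers in a Sullivan model, whose argument uses the augmentation of the algebra in an essential way; here I expect to need a similar lemma in the relative, non-augmented setting, with $\rho$ playing the structural role that the augmentation plays in the absolute case. Verifying that the relevant homotopies remain in the category of $(B\otimes B',d)$-dg-modules under the projection $\rho\otimes\id$, and that the external product of the obstructions is genuinely non-zero under the retraction hypothesis alone, is the technical heart I foresee.
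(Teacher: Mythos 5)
Your proposal is essentially the Félix--Halperin--Lemaire strategy transported verbatim, but the paper explicitly flags (just before the proof of Theorem~\ref{th:Main}) that this strategy ``can be used to easily prove Theorem~\ref{th:Main} provided that \emph{both} fibrations admit a homotopy retraction.'' The theorem assumes this only for $f$, and that asymmetry is exactly what breaks your plan. To ``package the non-existence of an $(n-1)$-level retraction as non-vanishing of an obstruction class'' for $g$, you need a dual, homology-level characterization of $\msecat(g)\le n$ of the kind given in Proposition~\ref{Prop:fRetract}, and that proposition is available only under the retraction hypothesis. Without it, $\msecat(g)\le n$ is defined via the existence of a homotopy retraction for the join model $j_n^B\colon B\to J_n^B$, and there is no accessible ``obstruction class'' for $g$ to feed into a K\"unneth external product. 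The paper circumvents this by a contrapositive: assuming $\msecat(f)=m$ and $\msecat(f\times g)\le m+p$, it uses the single obstruction cocycle $\omega\in K^mP$ (for $f$) together with the linear functional $I_\omega$ and a $B$-linear map $\psi$ into the product join model, then composes with the given $(A\otimes B,d)$-module retraction $r$ for $f\times g$ to manufacture a $(B,d)$-module retraction for $g$ directly. No obstruction for $g$ is ever needed.

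There are two further inaccuracies. First, you characterize $\msecat(\cdot)\le n$ via ``projection by $\Lambda^{>n}$ of the fibre variables''; that is the $\mcat$/nilpotence picture, not the $\msecat$ picture. For $\msecat$ the relevant construction is the join model $J_n$ (Fern\'andez--Su\'arez--Ghienne--Kahl--Vandembroucq), or, when a retraction is available, the ideal powers $K^{n+1}$ of the kernel of an s-model $\varphi\colon A\to A/K$; these do not coincide with word-length filtration. Second, the claim that applying $\rho\otimes\id$ ``projects word-length $m+n-1$ data in $V\oplus V'$ onto $V'$-word-length at most $n-1$'' is false: a monomial of total word length $m+n-1$ that is supported entirely on $V'$ survives $\rho\otimes\id$ with $V'$-word-length $m+n-1$, not $\le n-1$. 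Some genuine bookkeeping is needed here, and the paper performs it through the explicit maps $\psi$ and $I_\omega$ rather than through a filtration-degree estimate. On the other hand, your treatment of the easy inequality $\msecat(f\times g)\le\msecat(f)+\msecat(g)$ by tensoring module retractions is sound in spirit; the paper instead pulls back a join-level diagram from Gonz\'alez--Grant--Vandembroucq, but both routes give the inequality in full generality, without any retraction hypothesis.
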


Another important particular case of sectional category is Farber's (higher) topological complexity\cite{Farber03,Rudyak10} of a space $X$, $\tc_n(X)=\secat(\pi_n)$, where the fibration $\pi_n\colon X^{[1,n]}\rightarrow X^n$ is such that $\pi_n(\alpha)=\pa\alpha(1),\alpha(2),\ldots,\alpha(n)\pb$.\\

As a direct application of Theorem \ref{th:Main}, the module invariant associated to (higher) topological complexity, \[\mtc_n(X):=\msecat(\pi_n),\] 
is additive:

\begin{corollary}
Let $X$ and $Y$ be two spaces. Then \[\mtc_n(X\times Y)=\mtc_n(X)+\mtc_n(Y).\]
\end{corollary}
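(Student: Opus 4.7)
The plan is to apply Theorem \ref{th:Main} to the fibrations $\pi_n^X\colon X^{[1,n]}\to X^n$ and $\pi_n^Y\colon Y^{[1,n]}\to Y^n$ that define $\mtc_n(X)$ and $\mtc_n(Y)$. Two things have to be checked: that $\pi_n^{X\times Y}$ agrees, as a fibration up to homotopy, with the product $\pi_n^X\times\pi_n^Y$, and that the hypothesis of the theorem — namely, that $(\pi_n^X)_0$ admits a homotopy retraction — is satisfied.

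For the first point, the canonical shuffle isomorphism $(X\times Y)^n\cong X^n\times Y^n$ together with the obvious identification $(X\times Y)^{[1,n]}\cong X^{[1,n]}\times Y^{[1,n]}$ produces a commutative square identifying the fibration $\pi_n^{X\times Y}$ with $\pi_n^X\times\pi_n^Y$. By the fibrewise homotopy invariance of $\msecat$ one obtains $\mtc_n(X\times Y)=\msecat(\pi_n^X\times\pi_n^Y)$.

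For the second point — the only place where anything of substance has to be said, and hence the main step — one observes that $\pi_n^X$ is a fibrant replacement of the diagonal $\Delta_n\colon X\to X^n$, $x\mapsto (x,\ldots,x)$. This diagonal admits a strict retraction (any projection $X^n\to X$ works), so $\pi_n^X$ admits a homotopy retraction, and rationalising it supplies one for $(\pi_n^X)_0$. Theorem \ref{th:Main} then yields
\[\msecat(\pi_n^X\times\pi_n^Y)=\msecat(\pi_n^X)+\msecat(\pi_n^Y)=\mtc_n(X)+\mtc_n(Y),\]
which combined with the first step gives the corollary.
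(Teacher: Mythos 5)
Your proposal is correct and matches the paper's intent: the paper states the corollary is a direct application of Theorem \ref{th:Main} without spelling out the two verifications, which you supply — the product identification $\pi_n^{X\times Y}\cong \pi_n^X\times\pi_n^Y$ via the shuffle homeomorphism, and the observation that $\pi_n^X$ is a fibrant replacement of the diagonal $\Delta_n$, whose strict retraction (a projection) yields a homotopy retraction for $(\pi_n^X)_0$.
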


The results given are an improvement of \cite{Carrasquel15a}.
 
\section{Preliminaries}
This section contains a brief summary of the tools that will be used, see \cite{Bible} for further details. Let $(A,d)$ be a commutative differential graded algebra  over $\mQ$ (\cdga). An $(A,d)$-module is a chain complex $(M,d)$ together with a degree $0$ action of $A$ verifying that $d(ax)=d(a)x+(-1)^{\deg(a)}ad(x)$. The module $M^{\#}=\hom(M,\mQ)$ admits an $(A,d)$-module structure with action $(a\varphi)(x)=(-1)^{\deg(a)\deg(\varphi)}\varphi(ax)$ and differential $d(\varphi)=(-1)^{\deg(\varphi)}\varphi\circ d$. If $N$ is an $(A,d)$-modules, then the module $M\otimes_AN$ admits an $(A,d)$-module structure with action $a(m\otimes n)=(am)\otimes n$ and differential $d(m\otimes n)=d(m)\otimes n+(-1)^{\deg(m)}m\otimes d(n)$. An $(A,d)$-module $P$ is said to be semifree if there exists an increasing filtration $P_*$ by $(A,d)$-submodules such that $P_k/P_{k-1}$ is a free $(A,d)$ module on a basis of cocycles. Every $(A,d)$-module $M$ admits a semifree resolution, that is a quasi-isomorphism of $(A,d)$ modules, $P\quism M$, where $P$ is $(A,d)$-semifree. If $P$ is $(A,d)$-semifree and if $\eta$ is a quasi-isomorphism of $(A,d)$-modules then $\eta\otimes_A \id_P$ and $\id_P\otimes_A \eta$ are also quasi-isomorphisms. A morphism of $(A,d)$-modules $\varphi\colon (M,d)\rightarrow (N,d)$ is said to have a homotopy retraction if there exists a commutative diagram of $(A,d)$-modules,
\[\xymatrix{
(M,d)\ar[r]^{\id}\ar[d]_\varphi\ar[dr] &(M,d)\\
(N,d)&(P,d).\ar[l]^-\simeq\ar[u]\\
}\]

We will use the following lemma which is an expression of one of the central ideas of \cite{FHL98}.

\begin{lemma}\label{lem:retractHinj}
Let $\varphi\colon (A,d)\rightarrow (B,d)$ be a surjective \cdga\ morphism with kernel $K$ and $A$ of finite type. The morphism $\varphi$ admits a homotopy retraction of $(A,d)$-modules if and only if for any $(A,d)$ semi-free resolution $\eta\colon P\quism A^\#$, the projection \[\varrho\colon P\longrightarrow \frac{P}{K\cdot P}\] is injective in homology.
\end{lemma}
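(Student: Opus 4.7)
The plan is to express both conditions as a single linear-algebraic statement via the derived duality available under the finite-type hypothesis on $A$. In one step I would reformulate the retraction condition: by definition $\varphi$ admits a homotopy retraction iff $\id_A$ lies in the image of the pre-composition map
\[
\varphi^* \colon \Hom_{D(A)}(B,A[*]) \longrightarrow \Hom_{D(A)}(A,A[*]) = H^*(A),
\]
and since $\varphi^*$ is $H^*(A)$-linear for the natural post-composition action, the class of $\id_A$ (i.e.\ $1 \in H^0(A)$) lies in the image iff $\varphi^*$ is surjective in every degree.

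Next I would set up the derived duality. Because $A$ is of finite type, $A \cong (A^\#)^\#$ degree-wise, so $\text{RHom}_A(M, A) \simeq (M \otimes_A^L A^\#)^\#$ for every $(A,d)$-module $M$. Computing the derived tensor products via the given semi-free resolution $\eta \colon P \quism A^\#$ (so that $A \otimes_A^L A^\# \simeq P$ and $B \otimes_A^L A^\# \simeq B \otimes_A P = P/K\cdot P$) yields the identifications
\[
\Hom_{D(A)}(B, A[*]) \cong H^*(P/K\cdot P)^{\#}, \qquad \Hom_{D(A)}(A, A[*]) \cong H^*(P)^{\#}.
\]
Because the duality is natural in $M$, $\varphi^*$ is identified with the $\mQ$-dual of $H^*(\varrho) \colon H^*(P) \to H^*(P/K\cdot P)$. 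Degree-wise finite dimensionality then gives $\varphi^*$ surjective iff $H^*(\varrho)$ is injective, which combined with the first step produces the desired equivalence.

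The main technical hurdle I anticipate is the naturality verification $\varphi^* = H^*(\varrho)^\#$, which requires tracing the duality isomorphism through the map induced by $\varphi$. Since any two semi-free resolutions of $A^\#$ yield quasi-isomorphic quotients $P/K\cdot P$, it suffices to establish the lemma for a single convenient choice of $P$, e.g.\ one of finite type in each degree so that the pointwise $\mQ$-duality $A \cong (A^\#)^\#$ passes to $P$ itself.
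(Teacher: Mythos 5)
Your proposal is correct and the key idea---turning the retraction condition into a surjectivity statement, then dualizing to an injectivity statement using the finite-type identification $A\cong (A^\#)^\#$---is the same mechanism the paper uses, but you package it quite differently. The paper argues the two directions separately and concretely: for the forward implication it applies $-\otimes_A P$ to the explicit retraction diagram for $\varphi$ to manufacture a retraction diagram for $\varrho$ (no duality needed there); for the converse it dualizes $\eta$ and $\varrho$, picks a cocycle $f$ with $[f\circ\varrho]=[\eta^\#(1)]$, builds the $(A,d)$-module map $\alpha$ with $\alpha(1)=f$, and observes that $K$ annihilates $\hom(P/K\cdot P,\mQ)$ so that $\alpha$ factors through $B=A/K$. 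Your proof replaces this by a single chain of equivalences in the derived category: homotopy retraction $\Leftrightarrow$ $\id_A\in\im\varphi^*$ $\Leftrightarrow$ $\varphi^*$ surjective (by $H^*(A)$-linearity) $\Leftrightarrow$ $H^*(\varrho)^\#$ surjective (by $\text{RHom}_A(-,A)\simeq(-\otimes_A^L A^\#)^\#$ and naturality) $\Leftrightarrow$ $H^*(\varrho)$ injective. What you gain is conceptual economy and symmetry---both implications fall out at once, and the role of the finite-type hypothesis is isolated to a single double-dual identification. What the paper's proof buys is self-containment: it never invokes the model/derived category apparatus, and in particular it does not need the identification of ``homotopy retraction'' with ``split monomorphism in $D(A)$'', the balancing lemma for $\otimes^L$, or the naturality of the hom-tensor adjunction, all of which you rightly flag as steps needing verification. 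Two small remarks: the equivalence ``$f$ injective $\Leftrightarrow$ $f^\#$ surjective'' for $\mQ$-linear maps holds without any finite-dimensionality, so degree-wise finiteness of $H^*$ is not actually needed at that point (only for $A\cong(A^\#)^\#$); and your reduction to a single convenient $P$ is legitimate, since any two semi-free resolutions of $A^\#$ are homotopy equivalent as $(A,d)$-modules and $-\otimes_A B$ preserves such equivalences, so the homology-injectivity condition is independent of the chosen $P$.
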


\begin{proof}
Suppose that $\varphi$ admits a homotopy retraction of $(A,d)$-module.  This means that there exists a commutative diagram of $(A,d)$ module of the form
$$\xymatrix{
A\ar[r]^{\id_A} \ar[d]_{\varphi}_{} \ar[dr]^{i}_{}
& A
\\ 
B
& M \ar[l]^-{\simeq} \ar[u]^{}_{r} 
}$$
where we can suppose that $M$ is a $(A,d)$ semi-free resolution. 
Let now $P\quism A^{\#}$ be a $(A,d)$ semi-free resolution and apply $-\otimes_A P$ to the diagram above. We get
 $$\xymatrix{
P\ar[r]^{\id_P} \ar[d]_{}_{} \ar[dr]^{}_{}
& P
\\ 
B\otimes_A P
& M\otimes_A P. \ar[l]^-{\simeq} \ar[u]^{}_{} 
}$$

Since $B=\frac{A}{K}$ we have $B\otimes_A P=\frac{P}{K\cdot P}$ and the left hand morphism is the projection $\varrho\colon P\to \frac{P}{K\cdot P}$. The diagram shows that $\varrho$ admits a homotopy retraction of $(A,d)$-module and therefore that it is injective in homology.\\

Conversely, suppose that $\varrho$ is homology injective. Since $A$ is of finite type, $\eta^\#=\hom(\eta,\mQ)\colon A\rightarrow \hom(P,\mQ)$ is also a semi-free resolution. Since $\varrho$ is homology injective, \[\varrho^\#=\hom(\varrho,\mQ)\colon \hom\pa \frac{P}{K\cdot P}, \mQ\pb\rightarrow \hom(P,\mQ)\] is homology surjective. There exists then a cycle $f\in \hom\pa \frac{P}{K\cdot P},\mQ\pb$ such that $[f\circ \varrho]=[z]$, where $z=\eta^\#(1)$. Now define the $(A,d)$-module morphism $\alpha\colon A\rightarrow \hom(\frac{P}{K\cdot P},\mQ)$ as $\alpha(1)=f$. Then $\varrho^\#\circ \alpha$ is homotopic to $\eta^\#$ and thus a quasi-isomorphism. To finish the proof, we observe that $K\cdot \hom\pa\frac{P}{K\cdot P},\mQ\pb=\ca 0\cb$ and thus we have a diagram
\[\xymatrix{
A\ar[rr]^{\id_A}\ar[d]_\varphi&&A\ar[d]^{\varrho^\#\circ \alpha}_\simeq\\
B\ar[r]_-{\overline{\alpha}} &\hom(\frac{P}{K\cdot P},\mQ)\ar[r]_{\varrho^\#} &\hom(P,\mQ),\\
}\]
which yields a homotopy retraction for $\varphi$ as $(A,d)$-modules.
\end{proof}

Let us denote by $p_n\colon J^n_X(E)\to X$ the join of $n+1$ copies of a fibration $p\colon E\rightarrow X$. As it is well-known \cite{Schwarz66}, $\secat(p)\le n$ if and only if $p_n$ admits a homotopy section. By definition, $\msecat(p)$ is the smallest $n$ such that  $\apl(p_n)$ admits a homotopy retraction of $\apl(X)$-modules, where $\apl$ denotes Sullivan's functor of piecewise linear forms\cite{Su77}.\\

Recall the following general characterization of $\msecat(f)$ from \cite{FGKV06}. Let $(A,d)\to (A\otimes (\mQ\oplus X),d)$ be a semi-free extension of $(A,d)$-module which is a model for $f$. For $x\in X$, write $dx=d_0x+d_+x\in A\oplus A\otimes X$. Then $\msecat(f)$ is the least $m$ such that the following $(A,d)$ semi-free extension admits a retraction of $(A,d)$-module: 
$$j_m\colon(A,d) \to J_m=(A \otimes  (\mQ\oplus s^{-m}X^{\otimes m+1},d).$$
Here $d=d_0+d_+$ (in $A \oplus A\otimes s^{-m}X^{\otimes m+1}$) is given by 
\begin{eqnarray*}
\lefteqn{d(s^{-m}x_0\otimes \cdots \otimes x_m) = (-1)^{\sum \limits_{k=1}^{m}(k|x_{m-k}| + k -1)} d_0x_0\cdot \cdots \cdot d_0x_m}\\
&+ & \sum \limits _{i=0}^{m} \sum \limits_{j_i}(-1)^{(|a_{ij_i}| +1)(|x_0| + \cdots +|x_{i-1}| + m)}a_{ij_i}\otimes s^{-m}x_0 \otimes \cdots \otimes x_{ij_i}\otimes  \cdots \otimes x_m,
\end{eqnarray*}
for $x_0$,..., $x_m\in X$ and $d_+x_i = \sum \limits_{j_i}a_{ij_i}\otimes x_{ij_i}$ with $a_{ij_i}\in A$ and $x_{ij_i}\in X$.

Using the following notation (suggested by the standard rules of signs)
$$s^{-m}x_0 \otimes \cdots \otimes d_+x_{i}\otimes  \cdots \otimes x_m:=\sum \limits_{j_i}\sigma_{ij_i}a_{ij_i}\otimes s^{-m}x_0 \otimes \cdots \otimes x_{ij_i}\otimes  \cdots \otimes x_m$$
we can write $d_+(s^{-m}x_0\otimes \cdots \otimes x_m)$ as
\begin{eqnarray*}
d_+(s^{-m}x_0\otimes \cdots \otimes x_m) = (-1)^m\sum \limits _{i=0}^{m} \sum \limits_{j_i}\tau_i s^{-m}x_0 \otimes \cdots \otimes d_+x_{i}\otimes  \cdots \otimes x_m,
\end{eqnarray*}
where $\sigma_{ij_i}:=(-1)^{|a_{ij_i}|(|x_0| + \cdots +|x_{i-1}| + m)}$ and $\tau_i:=(-1)^{(|x_0| + \cdots +|x_{i-1}|)}$.\\

Now let $f$ be a fibration such that $f_0$ admits a homotopy retraction. Then by \cite{Carrasquel14c}, there exists a surjective model for $f$, $\varphi\colon A\rightarrow \frac{A}{K}$ (called s-model) such that $\msecat(f)$ is the smallest $m$ for which the projection $\rho_m\colon A\rightarrow \frac{A}{K^{m+1}}$ admits a homotopy retraction of $(A,d)$-modules. We have

\begin{proposition}\label{Prop:fRetract}
Let $f$ be a fibration such that $f_0$ admits a homotopy retraction, $\varphi\colon A\rightarrow \frac{A}{K}$ and s-model for $f$ and $(A,d)\to (A\otimes (\mQ\oplus X),d)$ a semifree model for $f$, as in previous paragraphs. Let also $\eta\colon P\quism A^\#$ be an $(A,d)$ semi-free resolution. Then the following are equivalent
\begin{itemize}
\item[(i)] $\msecat(f)\le m$,
\item[(ii)] the morphism $\id_P\otimes_Aj_m\colon P\rightarrow P\otimes(\mQ\oplus s^{-m}X^{\otimes m+1})$ is injective in homology, 
\item[(iii)] the projection $P\rightarrow \frac{P}{K^{m+1}\cdot P}$ is injective in homology.
\end{itemize}
\end{proposition}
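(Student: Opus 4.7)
The plan is to prove (i)$\Leftrightarrow$(iii) directly from Lemma \ref{lem:retractHinj}, and to prove (i)$\Leftrightarrow$(ii) by tensoring with $P$ (easy direction) together with a dualization argument (hard direction).

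For (i)$\Leftrightarrow$(iii), the characterization from \cite{Carrasquel14c} recalled just above the proposition says that $\msecat(f)\le m$ if and only if the projection $\rho_m\colon A\to A/K^{m+1}$ admits a homotopy retraction of $(A,d)$-modules. Since $\rho_m$ is a surjective \cdga\ morphism with kernel $K^{m+1}$ and $A$ is of finite type, Lemma \ref{lem:retractHinj} applies with $\varphi=\rho_m$ (so that the lemma's $K$ is our $K^{m+1}$) and translates this into condition (iii).

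For the implication (i)$\Rightarrow$(ii), $\msecat(f)\le m$ is by definition the existence of a homotopy retraction of $j_m\colon A\to J_m$ as $(A,d)$-modules, i.e.\ a commutative diagram through an intermediate semifree replacement. Tensoring the whole diagram over $A$ with $P$ and using that $P$ is $(A,d)$-semifree (so $\id_P\otimes_A(-)$ preserves quasi-isomorphisms) produces a homotopy retraction of $\id_P\otimes_A j_m$; in particular it is injective in homology.

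The crux is (ii)$\Rightarrow$(i). Assume $\id_P\otimes_A j_m$ is homology injective. Since $\hom(-,\mQ)$ is exact on $\mQ$-vector spaces, $\hom(\id_P\otimes_A j_m,\mQ)$ is surjective in homology. Via the tensor-hom adjunction this map identifies with $j_m^{\ast}\colon \hom_A(J_m,\hom(P,\mQ))\to \hom(P,\mQ)$. The finite-type hypothesis on $A$ provides a quasi-isomorphism $\hom(\eta,\mQ)\colon A\quism \hom(P,\mQ)$, and since $J_m$ is $(A,d)$-semifree, $\hom_A(J_m,-)$ preserves quasi-isomorphisms. Combining these, the induced map $j_m^{\ast}\colon \hom_A(J_m,A)\to \hom_A(A,A)=A$ is surjective in homology, so there is a cycle $\psi\in \hom_A(J_m,A)^0$ with $[\psi\circ j_m]=[1]$ in $H(A)$. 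Because $A$ is non-negatively graded we have $A^{-1}=0$, which forces the relation $\psi\circ j_m=1$ to hold strictly. Hence $\psi$ is an honest $(A,d)$-module retraction of $j_m$, which is a fortiori a homotopy retraction. The main obstacle lies precisely in this last step: carefully tracking the tensor-hom adjunction together with the finite-type identification $A\simeq \hom(P,\mQ)$, and then exploiting non-negative grading of $A$ to promote a chain-homotopy class to a strict retraction.
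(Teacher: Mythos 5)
Your proposal is correct, and it departs from the paper's argument in the key part. The paper cites a single diagram from \cite{Carrasquel14c} in which $j_m\colon A\to J_m$ and $\rho_m\colon A\to A/K^{m+1}$ both map to a common object $C$ (with $J_m\to C$ a quasi-isomorphism, and the right triangle strictly commutative); applying $\id_P\otimes_A-$ to that diagram and then invoking Lemma~\ref{lem:retractHinj} yields the equivalence of (ii) and (iii) with (i) in one stroke. You instead split the work: (i)$\Leftrightarrow$(iii) comes from Lemma~\ref{lem:retractHinj} applied to $\rho_m$ together with the characterization from \cite{Carrasquel14c}, while (i)$\Leftrightarrow$(ii) is handled on its own, the nontrivial direction (ii)$\Rightarrow$(i) by a dualization/tensor-hom argument: dualize $\id_P\otimes_A j_m$, identify the result with $j_m^*\colon \hom_A(J_m,P^\#)\to P^\#$, replace $P^\#$ by $A$ via the finite-type quasi-isomorphism $A\simeq\hom(P,\mQ)$ and semifreeness of $J_m$, obtain a cycle $\psi$ with $[\psi\circ j_m]=[\id_A]$, and then use $A^{-1}=0$ to upgrade this to a strict retraction $\psi\circ j_m=\id_A$. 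This argument is essentially the same device as the ``converse'' half of the proof of Lemma~\ref{lem:retractHinj}, transposed from $\rho_m$ to $j_m$.

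What each route buys: your (i)$\Leftrightarrow$(ii) argument uses only that $A$ is of finite type and that $J_m$ is $(A,d)$-semifree; it makes no use of the s-model and hence of the hypothesis that $f_0$ admits a homotopy retraction. So your proof shows that this part of the proposition holds for arbitrary fibrations, which is a genuine gain in generality. The paper's proof, by contrast, is shorter and more uniform: a single tensored diagram gives all three equivalences simultaneously, at the cost of leaning on the (hypothesis-dependent) diagram from \cite{Carrasquel14c}.
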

\begin{proof}
By \cite{Carrasquel14c}, there is a diagram
$$\xymatrix{
& A \ar[ld]_{j_m}^{ h_A} \ar[d] \ar[rd] &\\
J_m \ar[r]_{\simeq} & C & \frac{A}{K^{m+1}}\ar[l]
}$$ 
where the left hand triangle is commutative up to a homotopy of $(A,d)$-modules and the right hand triangle is strictly commutative. Applying to previous diagram $\id_P\otimes_A-$, we get the following diagram of $(A,d)$ module:
$$\xymatrix{
& P \ar[ld]_{\id_P\otimes_Aj_m} \ar[d] \ar[rd] &\\
P\otimes (\mQ\oplus s^{-m}X^{\otimes m+1}) \ar[r]_-{\simeq} & P\otimes_A C & \frac{P}{K^{m+1}\cdot P}\ar[l]
}$$ 
where the left hand triangle is commutative up to a homotopy of $(A,d)$-module and the right hand triangle is strictly commutative. The result then follows from Lemma \ref{lem:retractHinj}.
\end{proof}

\section{The main result}
Observe that Proposition \ref{Prop:fRetract} together with the strategy of \cite{FHL98} can be used to easily prove Theorem \ref{th:Main} provided that both fibrations admit a homotopy retractions. In line with our statement, we here present a proof of the additivity of module sectional category when only one of the fibrations admits homotopy retraction.\\

We first notice that one of the inequalities of Theorem \ref{th:Main} follows in general:

\begin{proposition} Let $p\colon E\to X$ and $p'\colon E'\to X'$ be two fibrations. 
We have \[\msecat(p\times p')\leq \msecat(p) + \msecat(p').\]
\end{proposition}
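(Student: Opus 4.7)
The plan is to construct a compatible morphism between the explicit $J$-modules attached to $p$, $p'$ and $p\times p'$, and then transport the given module homotopy retractions through it.

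First, I would set $m=\msecat(p)$ and $n=\msecat(p')$ and fix semifree models
$(A,d)\to (A\otimes (\mQ\oplus V),d)$ and $(B,d)\to (B\otimes (\mQ\oplus V'),d)$
for $p$ and $p'$, with associated morphisms $j_m\colon A\to J_m$ and $j'_n\colon B\to J'_n$ admitting, by hypothesis, module homotopy retractions $r$ and $r'$. The tensor product of the two given models is a semifree model for $p\times p'$ whose fibre part reads $\mQ\oplus W$ with $W=V\oplus V'\oplus (V\otimes V')$, so the module associated to $\msecat(p\times p')$ is
\[
J^{p\times p'}_{m+n}=(A\otimes B)\otimes \pa \mQ\oplus s^{-(m+n)}W^{\otimes m+n+1}\pb.
\]
Since the tensor product of module homotopy retractions is again one, $r\otimes r'$ is an $(A\otimes B)$-module homotopy retraction of $j_m\otimes j'_n\colon A\otimes B\to J_m\otimes J'_n$.

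The central step is to construct an $(A\otimes B)$-module morphism
\[
\Phi\colon J^{p\times p'}_{m+n}\longrightarrow J_m\otimes J'_n
\]
satisfying $\Phi\circ j^{p\times p'}_{m+n}=j_m\otimes j'_n$. Once $\Phi$ is in place, $(r\otimes r')\circ \Phi$ is a homotopy retraction of $j^{p\times p'}_{m+n}$, so $\msecat(p\times p')\le m+n$. The natural candidate for $\Phi$ mirrors the classical fibrewise \emph{staircase coupling} $J_X^m(E)\times J_{X'}^n(E')\to J_{X\times X'}^{m+n}(E\times E')$, which sends a pair of convex combinations $\sum t_i e_i$ and $\sum u_j e'_j$ to the combination $\sum_k v_k(e_{i_k},e'_{j_k})$ whose weights and lattice path $(i_k,j_k)$ from $(0,0)$ to $(m,n)$ come from the monotone coupling of the two probability vectors. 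Algebraically, $\Phi(1)=1$, and a generator $s^{-(m+n)}w_0\otimes\cdots\otimes w_{m+n}$ is sent to the sum, over all decompositions of each $w_i$ along $V\oplus V'\oplus (V\otimes V')$ containing exactly one factor in $V\otimes V'$, exactly $m$ in $V$ and exactly $n$ in $V'$, of the reordering (with Koszul signs) of the resulting $m+1$ elements of $V$ and $n+1$ elements of $V'$ into $s^{-m}V^{\otimes m+1}\otimes s^{-n}V'^{\otimes n+1}\subset J_m\otimes J'_n$.

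The main obstacle is to verify that $\Phi$ is a chain map. The explicit differential on $J^{p\times p'}_{m+n}$ recalled in the preliminaries splits into a "$d_0$-product" term over all factors and a "factorwise $d_+$" sum; each of these pieces further decomposes along the three summands of $W$, and only certain configurations survive the combinatorial projection defining $\Phi$. The verification amounts to comparing these surviving pieces, term by term, with the differentials of $J_m$ and $J'_n$ on the target side after the reordering; the matching reflects the compatibility of the staircase prescription with the join structure on both sides and reduces to a careful, but routine, Koszul sign bookkeeping. The remaining steps—tensoring the two given retractions and composing with $\Phi$—are then immediate.
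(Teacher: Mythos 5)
Your overall strategy — construct an algebraic comparison map $\Phi\colon J^{p\times p'}_{m+n}\to J_m\otimes J'_n$ over $A\otimes B$, then transport the tensor product of the two given retractions — is a genuinely different route from the paper's. The paper does not build any algebraic map at all: it cites a \emph{topological} map $\psi^{E,E'}_{n,m}\colon J^n_X(E)\times J^m_{X'}(E')\to J^{m+n}_{X\times X'}(E\times E')$ over $X\times X'$, constructed in \cite{GGV15}, and simply applies $\apl$ to the commuting triangle. After taking $\apl$, the arrows are reversed and one composes $\apl((p\times p')_{n+m})$ with $\apl(\psi^{E,E'}_{n,m})$; the homotopy retraction for $\apl(p_n\times p'_m)$ (obtained by tensoring the two given retractions, exactly as you do) then yields one for $\apl((p\times p')_{n+m})$. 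The paper's route has the virtue that no algebraic chain-map verification is needed: the commutativity and the existence of the map are established once and for all topologically.

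Your proposal, while plausible, has a genuine gap precisely at the point the paper chose to avoid. The existence of a chain map $\Phi$ of $(A\otimes B,d)$-modules with $\Phi\circ j^{p\times p'}_{m+n}=j_m\otimes j'_n$ is the entire content of the upper bound, and you neither pin down the formula nor verify it commutes with the (rather delicate) differential of $J^{p\times p'}_{m+n}$. In particular:
\begin{itemize}
\item The combinatorial description you give (``all decompositions of each $w_i$ along $V\oplus V'\oplus(V\otimes V')$ with exactly one factor in $V\otimes V'$, exactly $m$ in $V$ and exactly $n$ in $V'$'') does not visibly encode the monotone/staircase constraint you invoke in the topological heuristic; without it, the $d_0$-product term of the source differential (a single element of $A\otimes B$) and the two $d_0$-terms of the target differential (which land in $A\otimes J'_n$ and $J_m\otimes B$ respectively) have no obvious reason to match, and the residual pieces would have to be absorbed by $d_+$-type terms in a way you do not control.
\item Even the $d_0$ behaviour on $V\otimes V'\subset W$ is subtle: for $x\otimes y\in V\otimes V'$, the $A\otimes B$-component of $d(x\otimes y)$ in the product model is zero, so $d_0^{A\otimes B}$ vanishes on that summand while producing extra $d_+$-type terms involving both $d_0 x$ and $d_0 y$; your verification sketch does not address how $\Phi$ interacts with this splitting.
\item The signs $\sigma_{ij_i},\tau_i$ from the paper's description of $d_+$ on $J_k$ must be threaded through all the reorderings you perform. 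This is not just ``bookkeeping'': a wrong choice makes $\Phi$ fail to be a chain map, and you provide no mechanism (such as a reference to an established shuffle/coupling formula at the DGA-module level) that would guarantee coherence of the signs.
\end{itemize}
In short, the argument hinges on an explicit, unverified construction. If you wanted to complete it you would essentially be re-deriving, algebraically and from scratch, the content of the map from \cite{GGV15}. The cleaner fix is to do what the paper does: take the topological map from \cite{GGV15} as given, apply $\apl$, and let functoriality do the work. (The remaining ingredients you use — tensoring module homotopy retractions, and the fact that a homotopy retraction of a composite yields one for the first map into an $(A\otimes B)$-semifree object — are fine.)
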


\begin{proof}
In \cite[Pg. 26]{GGV15}, a commutative diagram of the following form is constructed:
$$\xymatrix{
J^n_X(E) \times J^m_{X'}(E')\ar[rr]^{\psi_{n,m}^{E,E'}} \ar[rd]_{p_n\times p'_m} && J^{m+n}_{X\times X'}(E\times E') \ar[dl]^{(p\times p')_{n+m}}\\
& X\times X'.
}$$
By applying $\apl$ to this diagram, we can establish that, if $\msecat(p)\leq n$ and $\msecat(p')\leq m$ then $\msecat(p\times p')\leq n+m$.
\end{proof}

Keeping the notation of Proposition \ref{Prop:fRetract}, the differential in $(A\otimes (\mQ\oplus X),d)$ can be taken such that $d_0(x)\in K$. This implies that, in $P\otimes (\mQ\oplus s^{-m}X^{\otimes m+1})$, $d_0(s^{-m}X^{\otimes m+1})\subset K^{m+1}\cdot P$. With this in mind we proceed to the 


\begin{proof}[Proof of Theorem \ref{th:Main}]
Take an s-model for $f$, $\varphi$ and an $(A,d)$ semi-free extension of $\varphi$, $A\otimes (\mQ\oplus X)$,
as in the previous section. Let also $(B,d)\to (B\otimes (\mQ\oplus Y),d)$ a semi-free model of $g$ (where $(B,d)$ is a \cdga\ model of the base of $g$). Then $f\times g$ is modeled by the tensor product of the two semi-free extensions which gives a semi-free extension of $(A\otimes B,d)$-modules that we write as follows
$$A\otimes B \to A\otimes B \otimes (\mQ\oplus Z) \qquad \mbox{where  } Z=X\oplus Y\oplus X\otimes Y.$$
In order to prove the statement, we suppose $\msecat(f)=m$ and $\msecat(f\times g)\leq m+p$ and we establish that $\msecat(g)\leq p$.\\

Let $P\quism A^{\#}$ be an $(A,d)$ semi-free resolution. Since $\msecat(f)=m$ we know from Proposition \ref{Prop:fRetract} that there exists $\Omega \in H(K^{m}\cdot P)$ which is not trivial in $H(P)$. Then there exist a cocyle $\omega \in K^{m}\cdot P$ representing $\Omega$ in $H(P)$ and $\theta \in P\otimes s^{-(m-1)}X^{\otimes m}$ such that $d\theta=\omega$. As a chain complex, we can write $P=\omega\cdot\mQ\oplus S$ where $d(S)\subset S$, and we define the following linear map of degree $-|\omega|$:
$$I_{\omega}\colon P\to \mQ, \quad I_{\omega}(\omega)=1, \,\,\, I_{\omega}(S)=0.$$
This map commutes with differentials. Now write the element $\theta \in P\otimes s^{-(m-1)}X^{\otimes m}$ as 
$$\theta=\sum \limits_{i}m_{i}\otimes s^{-(m-1)}x_i$$
 with $m_{i}\in P$ and $x_i\in X^{\otimes m}$. Since $d\theta=\omega$ we have $d_+\theta=0$ and $d_0\theta=\omega$.\\
 
Let $\psi\colon B\otimes (\mQ\oplus s^ {-p}Y^ {\otimes p+1}) \to P\otimes B \otimes (\mQ\oplus s^{-m-p}Z^{\otimes m+p+1})$ be the $B$-linear map of degree $|\omega|$ given by $\psi(1)=\omega \otimes 1$ and, for $y\in Y^{\otimes p+1}$,
$$
\begin{array}{l}
\psi(s^{-p}y)=-(-1)^{p|\omega|}\sum \limits_{i}(-1)^{(p+1)|m_i|}m_{i}\otimes 1\otimes s^{-m-p} x_{i}\otimes y
\end{array}$$
and extended to $ B\otimes (\mQ\oplus s^ {-p}Y^ {\otimes p+1})$ by the rule $\psi(b\cdot x)=(-1)^{|b||\omega|}b\cdot \psi(x)$. Notice that the structure of $(B,d)$-module on $P\otimes B \otimes (\mQ\oplus s^{-m-p}Z^{\otimes m+p+1})$ is given by $b\cdot (m\otimes b'\otimes z)=(-1)^{|m||b|}m\otimes b b'\otimes z$. In particular $\psi(b)=\omega\otimes b$. Let us now see that $\psi$ commutes with differentials, that is $\psi \circ d=(-1)^{|\omega|}d\circ \psi$. Since $\psi$ is $B$-linear and since $\omega$ is a cocycle we only have to see that
$$d\psi(s^{-p}y)=(-1)^{|\omega|}\psi(ds^{-p}y),$$ for each $y\in Y^{\otimes p+1}$.
Writing the differential of $P\otimes B \otimes (\mQ\oplus s^{-m-p}Z^{\otimes m+p+1})$ as $$d=d_0+d_+\in P\otimes B \oplus P\otimes B\otimes s^{-m-p}Z^{\otimes m+p+1}$$ we can check that
\begin{itemize}
\item $d_0\psi(s^{-p}y)=(-1)^ {|\omega|}\psi(d_0s^{-p}y)$ using the fact that $d_0\theta=\omega$, and
\item $d_+\psi(s^{-p}y)=(-1)^ {|\omega|}\psi(d_+s^{-p}y)$ using the fact
that $d_+\theta=0$. 
\end{itemize}

\noindent From $\msecat(f\times g)\leq m+p$ we know that the morphism
$$j_{m+p}^{A\otimes B}\colon A\otimes B \to A\otimes B \otimes (\mQ\oplus s^{-m-p}Z^{\otimes m+p+1})$$
admits a retraction $r$ of $(A\otimes B,d)$-modules. 
Finally the composite 

$$\xymatrix{
B\otimes (\mQ\oplus s^ {-p}Y^ {\otimes p+1})\ar[r]^-{\psi} & P\otimes B \otimes (\mQ\oplus s^{-m-p}Z^{\otimes m+p+1})\ar[d]^{P\otimes_A r}\\
&P\otimes B \ar[r]^{I_{\omega} \otimes \id} & B
}$$
gives a morphism (of degree $0$) of $(B,d)$-module which is a retraction for the inclusion $B\to B\otimes (\mQ\oplus s^ {-p}Y^ {\otimes p+1})$.
This proves that $\msecat(g)\leq p$.
\end{proof}

\section*{Acknowledgements}
The first author acknowledges the Belgian Interuniversity Attraction Pole (IAP) for support within the framework ``Dynamics, Geometry and Statistical Physics'' (DYGEST).

\vspace{2cm}
\noindent Institut de Recherche en Math\'ematique et Physique,\\
Universit\'e catholique de Louvain,\\
2 Chemin du Cyclotron,\\
1348 Louvain-la-Neuve, Belgium.\\
E-mail: \texttt{jose.carrasquel@uclouvain.be}

\vspace{1cm}

\noindent Centro de Matem\'atica,\\
Universidade do Minho,\\
Campus de Gualtar,\\
4710-057 Braga, Portugal.\\
E-mail: \texttt{lucile@math.uminho.pt}

\vspace{1cm}

\noindent Department of Mathematics and Statistics,\\
University of Ottawa,\\
585 King Edward Ave.,\\
Ottawa, Ontario, K1N 6N5, Canada.\\
E-mail: \texttt{pparent@uottawa.ca}

\end{document}